\let\OLDthebibliography\thebibliography
\renewcommand\thebibliography[1]{
	\OLDthebibliography{#1}
	\setlength{\parskip}{0pt}
	\setlength{\itemsep}{0pt plus 0.3ex}
}
\newtheorem{Corollary}{Corollary}
\newtheorem{Lemma}{Lemma}
\newtheorem{Proposition}{Proposition}
\newtheorem{Theorem}{Theorem}
\newcommand{\R}{{\mathbb R}}
\newcommand{\Z}{{\mathbb Z}}
\newcommand{\N}{{\mathbb N}}
\def\env@sqcases{%
	\let\@ifnextchar\new@ifnextchar
	\left\lbrack
	\def\arraystretch{1.2}%
	\array{@{}l@{\quad}l@{}}%
}
\begin{document}

\date{}
\title{Solution to a conjecture of Schmidt and Tuller \\ on one-dimensional packings and coverings}
\author{
	N\'ora Frankl\thanks{Alfréd Rényi Institute of Mathematics, Budapest, Hungary. Supported by ERC Advanced Grant `GeoScape'. Email:~\href{mailto:nfrankl@renyi.hu}{\tt nfrankl@renyi.hu}. },
	Andrey Kupavskii\thanks{MIPT, Moscow, Russia and G-SCOP, Universit\'e Grenoble-Alpes, CNRS, France. Research supported by the \href{https://rscf.ru/project/21-71-10092/}{RSF grant N 21-71-10092}. Email:~\href{mailto:kupavskii@ya.ru}{\tt kupavskii@ya.ru}.},
	Arsenii Sagdeev\thanks{MIPT, Moscow, Russia and Alfréd Rényi Institute of Mathematics, Budapest, Hungary. Supported in part by ERC Advanced Grant `GeoScape'. The author is also a winner of Young Russian Mathematics Contest and would like to thank its sponsors and jury. Email:~\href{mailto:sagdeevarsenii@gmail.com}{\tt sagdeevarsenii@gmail.com}.}
}

\maketitle

\begin{abstract}
	In 2008, Schmidt and Tuller stated a conjecture concerning optimal packing and covering of integers by translates of a given three-point set.  In this note, we confirm their conjecture and relate it to several other problems in combinatorics.
\end{abstract}

\section{Introduction}

Let $S$ be a subset of $\mathbb Z$. We call $S' \subset \Z$ a {\it translated copy} (or a {\it translate}) {\it of $S$} if for some $t \in \Z$ we have $S' = S+t$. In this note, we study packings in and coverings of $\mathbb Z$ by translates of a finite set $S$. 
We also work with two related notions: $S$-free and $S$-blocking sets. We define all these four notions below. Given $A\subset \Z$, recall that the \emph{upper density $\overline{d}(A)$}  is defined as $\overline{d}(A)=\limsup_{n\to \infty}\frac{|A\cap[-n,n]|}{2n+1}$. Similarly, the \emph{lower density $\underline{d}(A)$} is defined as $\underline{d}(A)=\liminf_{n\to \infty}\frac{|A\cap[-n,n]|}{2n+1}$. Further, if $\overline{d}(A)=\underline{d}(A)$, then we define the \emph{density $d(A)$} of $A$ as $d(A) \coloneqq \overline{d}(A)=\underline{d}(A)$.

\begin{itemize}

\item A subset $A \subset \Z$ is called {\it $S$-packing} if translates $S+a_1$ and $S+a_2$ are disjoint for all distinct $a_1,a_2 \in A$. Define a \emph{maximum packing density $d_p(S)$ of $S$} as the supremum of the upper densities $\overline{d}(A)$ of $S$-packing subsets $A\subset \Z$.

\item A subset $A \subset \Z$ is called {\it $S$-covering} if each $t \in \Z$ belongs to some translate $S+a$, $a \in A$. Define a \emph{minimum covering density $d_c(S)$ of $S$} as the infimum of the lower densities $\underline{d}(A)$ of $S$-covering subsets $A\subset \Z$.

\item A subset $A \subset \Z$ is called {\it $S$-free} if $A$ does not contain any translate of $S$. We denote by $d_f(S)$ the supremum of the upper densities $\overline{d}(A)$ of $S$-free subsets $A\subset \Z$.

\item A subset $A\subset \Z$ is called \emph{$S$-blocking} if $A$ intersects every translate of $S$. We denote by $d_b(S)$ the infimum of the lower densities $\underline{d}(A)$ of $S$-blocking subsets $A\subset \Z$.

\end{itemize}

It is not hard to check that for all $A,S \subset \Z$ the following three conditions are equivalent: the {\it reflection}  $-A = \{-a: a \in A\}$ is $S$-covering, $A$ is $S$-blocking, and the compliment $\Z\!\setminus\!A$ is $S$-free, see~\cite{Axe} for details. Hence, for any finite $S \subset \Z$, we have
\begin{equation} \label{eq_cbf}
	d_c(S) = d_b(S) = 1-d_f(S).
\end{equation}

Coverings of the integers by translates of infinite sets were considered by Erd\H os~\cite{Erd54} and Lorenz~\cite{Lor}, but probably the first to study minimum covering densities of finite subsets of $\Z$ was Newman~\cite{New}. It is clear that if $|S|=2$, then $d_p(S)=d_c(S) = \frac{1}{2}$. However, even for  $3$-point sets, the situation becomes much more complex. In particular, Newman~\cite{New} showed that $d_c(S)\le \frac 25$ for any $3$-element subset $S \subset \Z$ and this is tight if $S=\{0,1,3\}$. 

For general sets, Newman \cite{New} showed that $d_c(S)\le (1+o(1))\frac{\log k}{k}$ for any $k$-element set $S,$ where $k\to \infty$. Moreover, this bound is asymptotically tight for certain $k$-element sets. A sharper asymptotic result in a more general setting was obtained by Bollob\'as, Janson and Riordan~\cite[Theorem 4.5]{BJR}. Bounds on packings are not so sharp: given $k \in \N$, it is known that $d_p(S)\ge \frac{2}{k^2}$ for all $k$-element sets $S$, as well as that there is a set $S$ with $d_p(S)\le \frac{2.646}{k^2},$ see~\cite{Wein, Gol}. 

Our main result confirms the conjecture of Schmidt and Tuller~\cite{Schtu} that suggested certain explicit expressions for packing and covering densities for $3$-point sets. Some particular cases were resolved by Schmidt and Tuller themselves, as well as in the paper~\cite{Ste} by Stein from 1986, twenty years before the conjecture was formulated. At the heart of our proof is an intricate double-counting argument that may be of independent interest.

\begin{Theorem}[Schmidt--Tuller conjecture for packings and coverings\footnote{While the first version of this paper was under review, the authors found out that the first (`packing') half of the Schmidt--Tuller conjecture is equivalent to a graph-theoretical problem considered by Rabinowitz and Proulx~\cite[Conjecture 5.5]{RabPr} in 1985. Liu and Zhu~\cite{LZ04} confirmed this conjecture in 2004, four years before Schmidt and Tuller stated it independently in terms of packing densities. Our proof is different and treats packings and coverings in a unified way. 
}]\label{thm:3baton}
	Let $\lambda_1,\lambda_2 \in \N$ be two coprime integers. Then for $S=\{0,\lambda_1,\lambda_1+\lambda_2\}$, we have
	\begin{equation*}
		d_p(S) = \max \left (\frac{\lfloor \frac{1}{3}( \lambda_1+2\lambda_2)\rfloor}{\lambda_1+2\lambda_2},\frac{\lfloor \frac{1}{3}( 2\lambda_1+\lambda_2)\rfloor}{2\lambda_1+\lambda_2}\right),
	\end{equation*}
	and
	\begin{equation*}
		d_c(S) = \min \left (\frac{\lceil \frac{1}{3}( \lambda_1+2\lambda_2)\rceil}{\lambda_1+2\lambda_2},\frac{\lceil \frac{1}{3}( 2\lambda_1+\lambda_2)\rceil}{2\lambda_1+\lambda_2}\right).
	\end{equation*}
\end{Theorem}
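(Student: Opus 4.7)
The plan is to prove the two sides of the formula for $d_p(S)$ separately, and then derive the covering formula from the dual identity \eqref{eq_cbf}. First I would reduce to periodic configurations: by a standard averaging argument one has $d_p(S) = \sup_n \alpha(\Gamma_n)/n$, where $\Gamma_n$ is the Cayley graph on $\Z/n\Z$ with connection set the nonzero residues of $\pm\{\lambda_1,\lambda_2,\lambda_1+\lambda_2\}$ modulo $n$. A parallel reduction holds for $d_c(S)$.

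For the lower bound on $d_p(S)$, take $n_1 = 2\lambda_1 + \lambda_2$ and $n_2 = \lambda_1 + 2\lambda_2$. Modulo $n_1$ we have $\lambda_1+\lambda_2 \equiv -\lambda_1$, so the forbidden differences reduce to $\pm\lambda_1,\pm\lambda_2 \pmod{n_1}$. Since $\gcd(\lambda_1,n_1) = \gcd(\lambda_1,\lambda_2) = 1$, multiplication by $\lambda_1^{-1} \pmod{n_1}$ is an automorphism of $\Z/n_1\Z$; because $\lambda_2 \equiv -2\lambda_1 \pmod{n_1}$, it sends the connection set to $\pm\{1,2\}$ and identifies $\Gamma_{n_1}$ with the circulant $C_{n_1}(\pm 1,\pm 2)$. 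The maximum independent set $\{0,3,6,\dots\}$ of the latter has size $\lfloor n_1/3\rfloor$. An analogous construction on $\Z/n_2\Z$ gives size $\lfloor n_2/3\rfloor$, and the maximum of the two densities is the claimed lower bound for $d_p(S)$. Taking the complements of these independent sets and applying \eqref{eq_cbf} produces periodic $S$-coverings realising the matching upper bound on $d_c(S)$.

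The remaining task, and the main difficulty, is the upper bound on $d_p(S)$. Given a periodic $S$-packing $A \subset \Z/n\Z$ of size $k$, order its elements cyclically as $a_1,\dots,a_k$ and let $g_i = a_{i+1}-a_i$ (indices mod $k$), $\sum_i g_i = n$, be its gap sequence. The packing condition says that no cyclic partial sum $g_i + \dots + g_{i+j-1}$, $1 \le j < k$, equals $\lambda_1$, $\lambda_2$, or $\lambda_1+\lambda_2$. For each starting index $i$ and each $\ell \in \{\lambda_1,\lambda_2,\lambda_1+\lambda_2\}$, there is then a unique crossing index $j(i,\ell)$ at which the partial sum first exceeds $\ell$, and the gap $g_{i+j(i,\ell)-1}$ is forced to jump past $\ell$. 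My plan for the intricate double counting alluded to in the introduction is to sum a carefully chosen quantity over the $3k$ pairs $(i,\ell)$ in two ways: one enumeration produces an expression linear in $n/k$, while grouping the pairs according to which gap performs the jump yields a combinatorial lower bound, and the two estimates together force $k/n$ to be at most the claimed maximum. The matching lower bound on $d_c(S)$ then follows from the dual version of the same double counting applied to $S$-free sets.

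The hard part will be implementing this double counting uniformly across all values of $n$. The two extremal periods $n_1, n_2$ are distinguished precisely by the collapse of three forbidden differences into two; for generic $n$ no such collapse occurs, and one has to show that the obstruction comes from the pure combinatorics of gap sequences rather than from any special structure of $\Gamma_n$. I expect a careful case split on $n \bmod 3$, together with an asymmetric treatment of ``short'' gaps (of size at most $\lambda_1+\lambda_2$) versus ``long'' ones, and possibly a weighting of the pairs $(i,\ell)$ that depends on whether $\ell = \lambda_1+\lambda_2$, to be required.
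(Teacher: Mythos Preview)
Your lower-bound constructions are exactly the paper's: the isomorphism of $\Gamma_{n_1}$ with $C_{n_1}(\pm1,\pm2)$ is precisely the observation behind Proposition~\ref{d l1=l2+1} and the packing construction in Section~\ref{sec:pack}. So that half is fine.

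The upper bound, however, is not proved in your proposal; it is only a plan, and one whose implementation you yourself flag as unclear. Your scheme requires bounding $\alpha(\Gamma_n)/n$ for \emph{every} $n$, and the ``carefully chosen quantity'' to be double-counted over the $3k$ pairs $(i,\ell)$ is never specified. The anticipated case split on $n\bmod 3$, the short/long gap dichotomy, and the weighting on $\ell=\lambda_1+\lambda_2$ are all speculative. As written, there is no argument here, only the hope that one exists.

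The paper sidesteps this difficulty entirely by \emph{not} working modulo $n$. Instead of bounding independent sets in $\Gamma_n$ for all $n$, it proves a purely local inequality directly on $\Z$: for any $S$-free (resp.\ $S$-packing) $A\subset\Z$ and any shift $t$, one has $2|I_1(t)\setminus A|+|I_2(t)\setminus A|\ge 2\lambda_1+\lambda_2$ (resp.\ $\ge 4\lambda_1+2\lambda_2$), where $I_1(t),I_2(t)$ are two overlapping intervals of length $2\lambda_1+\lambda_2$ whose union has length $3\lambda_1+2\lambda_2$. The double counting takes place on this fixed window: one partitions $I_1\cup I_2$ into five subintervals $E_1,\dots,E_5$, assigns to each of the $2\lambda_1+\lambda_2$ translates $S'\subset I_1\cup I_2$ the leftmost point of $S'\setminus A$ (or two such points in the packing case), and counts how many translates can be assigned to each $E_i$. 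A short injection from a subset of $E_4\setminus A$ into $E_1\setminus A$ handles the only delicate term. A separate averaging lemma (Proposition~\ref{average}) then converts the local inequality into the global density bound. This buys a uniform argument with no dependence on a period $n$ and no case analysis beyond the single congruence $\lambda_1-\lambda_2\bmod 3$, which is exactly the obstacle your approach runs into.
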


It is not hard to check that one can rewrite the statement of Theorem~\ref{thm:3baton} in the following way:
\begin{equation*} \small{
	d_p(S) =
	\begin{cases}
		1/3 & \mbox{if } \lambda_1 \equiv \lambda_2 \hspace{17pt} \pmod 3 ;\\[3pt]
		\frac{2\lambda_1+ \lambda_2 - 1}{3(2\lambda_1+\lambda_2)} & \mbox{if } \lambda_1 \equiv \lambda_2-1 \pmod 3 ;\\[3pt]
		\frac{\lambda_1+ 2\lambda_2 -1}{3(\lambda_1+2\lambda_2)} & \mbox{if } \lambda_1 \equiv \lambda_2+1 \pmod 3, \hspace{20pt}
	\end{cases}
	d_c(S) =
	\begin{cases}
		1/3 & \mbox{if } \lambda_1 \equiv \lambda_2 \hspace{17pt} \pmod 3 ;\\[3pt]
		\frac{2\lambda_1+ \lambda_2 +1}{3(2\lambda_1+\lambda_2)} & \mbox{if } \lambda_1 \equiv \lambda_2+1 \pmod 3 ;\\[3pt]
		\frac{\lambda_1+ 2\lambda_2 +1}{3(\lambda_1+2\lambda_2)} & \mbox{if } \lambda_1 \equiv \lambda_2-1 \pmod 3.
	\end{cases} }
\end{equation*}
While the proofs for coverings and packings in Theorem~\ref{thm:3baton} are similar, we did not manage to find a clean unified argument, so we spell them out separately in Sections~\ref{sec:cov} and~\ref{sec:pack} respectively.

Note that no generality is lost because of the assumption  that $\lambda_1,\lambda_2$ are relatively prime. Indeed, if $S$ is as in the Theorem \ref{thm:3baton} and $S' = \{0, m\lambda_1, m(\lambda_1+\lambda_2)\}$ for some integer $m$, then $d_c(S) = d_c(S').$ To see this, note that each translate of $S'$ entirely lies in $m\Z+i$ for some $i\in [m]$. Thus, when covering by $S'$, we have to cover each $m\Z+i,$ $i\in [m],$ individually. For each of these subproblems, we recover the original problem of covering $\Z$ with translates of $S$ (after scaling down by a common factor of $m$). The same argument works for packing densities.

In the next subsection, we state a slightly stronger version of the second part of Theorem~\ref{thm:3baton}, as well as some related results.

\subsection{Related results}

Let $d_f(S,-S)$ be the supremum of the upper densities $\overline{d}(A)$ of subsets $A\subset \Z$ that avoid translates of \emph{both} $S$ and $-S$. Taking~\eqref{eq_cbf} into account, the second part of Theorem~\ref{thm:3baton} can be strengthened as follows.

\begin{Theorem}[A stronger version of Theorem \ref{thm:3baton} for coverings]\label{thm:-S} Let $S$ be as in Theorem~\ref{thm:3baton}. Then
	\begin{equation*}
		d_f(S,-S)=d_f(S)= \max \left (\frac{\lfloor \frac{2}{3}( \lambda_1+2\lambda_2)\rfloor}{\lambda_1+2\lambda_2},\frac{\lfloor \frac{2}{3}( 2\lambda_1+\lambda_2)\rfloor}{2\lambda_1+\lambda_2}\right).
	\end{equation*}
\end{Theorem}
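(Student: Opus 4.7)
The plan is to derive the upper bound from Theorem~\ref{thm:3baton} and then produce a matching explicit construction. Since every $\{S,-S\}$-free set is in particular $S$-free, the inequality $d_f(S,-S)\le d_f(S)$ is immediate; combining it with \eqref{eq_cbf}, Theorem~\ref{thm:3baton}, and the identity $1-\lceil N/3\rceil/N=\lfloor 2N/3\rfloor/N$ (valid for every positive integer $N$), we conclude
\[
d_f(S,-S)\;\le\;d_f(S)\;=\;1-d_c(S)\;=\;\max\!\left(\frac{\lfloor\tfrac{2}{3}(\lambda_1+2\lambda_2)\rfloor}{\lambda_1+2\lambda_2},\;\frac{\lfloor\tfrac{2}{3}(2\lambda_1+\lambda_2)\rfloor}{2\lambda_1+\lambda_2}\right).
\]
The whole proof therefore reduces to exhibiting an $\{S,-S\}$-free subset of $\Z$ whose upper density matches this maximum.

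Assume without loss of generality that the maximum is attained at $N:=2\lambda_1+\lambda_2$ (the other case follows after noting that a translate of $-S$ is $\{0,\lambda_2,\lambda_1+\lambda_2\}$, which merely swaps the roles of $\lambda_1$ and $\lambda_2$). The key observation will be that $\lambda_1+\lambda_2\equiv-\lambda_1\pmod N$, whence for every $t\in\Z$,
\[
S+t\;\equiv\;\{t-\lambda_1,t,t+\lambda_1\}\pmod N\quad\text{and}\quad-S+t\;\equiv\;\{t-\lambda_1,t,t+\lambda_1\}\pmod N.
\]
Hence the families of residue triples produced by translates of $S$ and of $-S$ coincide, consisting precisely of the $3$-term APs in $\Z/N\Z$ with common difference $\lambda_1$. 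Since $\gcd(\lambda_1,N)=\gcd(\lambda_1,\lambda_2)=1$, multiplication by $\lambda_1^{-1}\bmod N$ is a bijection of $\Z/N\Z$ that carries each such AP onto a triple of cyclically consecutive residues; moreover $0<2\lambda_1<N$ guarantees that the three residues in the triple above are pairwise distinct.

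The plan is then to choose a set $T\subseteq\Z/N\Z$ of size $\lfloor 2N/3\rfloor$ containing no three cyclically consecutive integers — easily built from the periodic pattern $\{3i,3i+1\}$ together with a short, carefully adjusted leftover — and define $A:=\{x\in\Z:\lambda_1^{-1}x\bmod N\in T\}$. This set is periodic modulo $N$ with density $|T|/N=\lfloor 2N/3\rfloor/N$, and any hypothetical containment $S+t\subseteq A$ or $-S+t\subseteq A$ would pull back via $x\mapsto\lambda_1^{-1}x$ to three cyclically consecutive elements of $T$, contradicting its choice. The only delicate point is arranging the leftover in $T$ so that no wrap-around triple is created when $N\not\equiv 0\pmod 3$; this is a routine finite case analysis in $N\bmod 3$ and constitutes essentially the only real bookkeeping in the argument.
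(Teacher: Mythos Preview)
Your construction for the lower bound is correct and is essentially the same as the paper's Proposition~\ref{d l1=l2+1}: work modulo $N=2\lambda_1+\lambda_2$, observe that both $S$ and $-S$ reduce to the same three-term arithmetic progression $\{t-\lambda_1,t,t+\lambda_1\}$, rescale by $\lambda_1^{-1}$, and pick a set avoiding three cyclically consecutive residues. That part is fine.

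The gap is in the upper bound. You invoke Theorem~\ref{thm:3baton} to get $d_f(S)=1-d_c(S)$ with the claimed value, but in this paper Theorem~\ref{thm:3baton} for coverings is \emph{deduced from} Theorem~\ref{thm:-S}, not the other way around (see the last paragraph of Section~\ref{sec:cov}). So your argument is circular: you are assuming precisely the nontrivial inequality $d_f(S)\le\lfloor\frac{2}{3}(2\lambda_1+\lambda_2)\rfloor/(2\lambda_1+\lambda_2)$ that constitutes the heart of the proof. The paper establishes this bound directly, via a double-counting argument (Proposition~\ref{prop:3batonupperv2}) showing that for any $S$-free $A$ and any $t$ one has $2|I_1(t)\setminus A|+|I_2(t)\setminus A|\ge 2\lambda_1+\lambda_2$ for two overlapping intervals of length $2\lambda_1+\lambda_2$, and then an averaging lemma (Proposition~\ref{average}) that converts such a local inequality into a global density bound. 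None of this is bypassed by your argument; you have supplied only the easy half.
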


It should be clear that $d_f(S,-S)\leq d_f(S)$. Thus, in order to prove Theorem~\ref{thm:-S} (and obtain the second part of Theorem~\ref{thm:3baton} as a corollary), it is sufficient to do the following. First, get the lower bound on $d_f(S,-S)$ via explicit construction of a subsets $A\subset \Z$ that avoid translates of both $S$ and $-S$, and then prove the upper bound on $d_f(S)$ that meets the lower one.

The reason we spell out  Theorem~\ref{thm:-S} separately is in its connection to certain Ramsey-theoretic questions. Let $\R^n_{\infty}$ be the $n$-space equipped with the maximum norm. The second and third authors managed to prove~\cite{KS, KS2} that for any finite metric space $\mathcal{M}$, the minimum number of colours needed to colour the points of $\R^n_{\infty}$ such that no isometric copy of $\mathcal{M}$ is monochromatic exponentially tends to infinity with $n$. Besides, they qualitatively  reduced this general question to the question of determining $d_f(S,-S)$ for certain linear sets. In a follow-up paper~\cite{FKS}, the authors of the present note made this connection much more explicit and tight.

Schmidt and Tuller~\cite{Schtu} showed that the optimal packing (and covering) density of $S$ is attained on some periodic set, with a period bounded by $\textrm{diam}(S)(2^{\textrm{diam}(S)}+1)$. Hence, these densities are always rational, and can be computed for any given finite $S$. Interestingly, it is an open question whether an analogue of this is true for packings and coverings of $\Z^n$ for $n\geq 2$, see~\cite{SchTul2010}. One known result in this direction is that if $S$ tiles $\mathbb{Z}^2$, then there is a tiling  by a periodic set~\cite{Bhat2020}. 

Several other problems concerning tilings have been studied. For example, the question of characterising the sets $S\subset \Z$ that tile $\Z$ has received a lot of attention. Using the fact that optimal packings and coverings are attained on periodic sets, this is the same as asking when does $d_c(S) = d_p(S) = |S|^{-1}$ hold. Axenovich et al.~\cite[Corollary 15]{Axe} proved an even stronger statement: a finite subset $S\subset \Z$ tiles $\Z$ if and only if $d_f(S,-S) = 1-|S|^{-1}$ holds. Though the general question of characterisating such sets $S$ remains open already in dimension one, Newman~\cite{New2} managed to get an elegant solution for the case when $|S|$ is a power of a prime number. His result was extended to $|S| = p_1^{\alpha_1} p_2^{\alpha_2}$ by Coven and Meyerowitz~\cite{CM}, although the characterisation is more technical. Both their proofs were algebraic and based on studying the properties of the cyclotomic polynomials. Newman~\cite{New2} asked for a different, `trivial' proof for the case $|S|=3$. We note that our Theorem~\ref{thm:3baton} provides if not `trivial', then at least purely combinatorial solution of this problem.

Concerning other small sets,  Axenovich et al.~\cite{Axe} recently showed via a computer-aided proof that $d_c(S)\le \frac 13$ for any $4$-element set $S\subset \Z$, confirming a conjecture of Newman~\cite{New}. This bound is tight for the set $S=\{0,1,2,4\}$. Some partial results on this conjecture were previously obtained by Weinstein~\cite{Wein} and Bollob\'as, Janson and Riordan~\cite{BJR}. In the latter paper there are also conjectures about the best possible general upper bounds on the covering densities for $5$- and $6$-element sets.

Finally, we mention that $S$-blocking and $S$-free sets are related to polychromatic colourings of the integers. For a given $S,$ a colouring of $\Z$ is {\it polychromatic} if every translate of $S$ contains an element of each colour. The {\it polychromatic number} $p(S)$ of $S$ is defined as the largest number of colours for which such a colouring exists. It is not difficult to see that $d_b(S)\le 1/p(S)$. For a proof and more on polychromatic colourings of integers, see \cite{Axe}.

\section{Proof of Theorem~\ref{thm:-S} and Theorem~\ref{thm:3baton} for coverings}\label{sec:cov}

The following proposition solves the simplest case of Theorem \ref{thm:-S} when $\lambda_1 \equiv \lambda_2 \pmod 3.$

\begin{Proposition} \label{d l1=l2}
	Let $\lambda_1,\lambda_2\in \N$ be two coprime integers such that $\lambda_1 \equiv \lambda_2 \pmod 3.$ Then for $S=\{0,\lambda_1,\lambda_1+\lambda_2\}$, we have
	\begin{equation*}
		d_f(S,-S) = d_f(S) = \frac{2}{3}.
	\end{equation*}
\end{Proposition}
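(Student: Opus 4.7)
The plan is to prove matching lower and upper bounds on the common value $2/3$. Since the inclusion $\{A : A \text{ avoids } S \text{ and } -S\} \subseteq \{A : A \text{ is } S\text{-free}\}$ yields $d_f(S,-S) \le d_f(S)$ for free, I only need to establish $d_f(S) \le 2/3$ and $d_f(S,-S) \ge 2/3$.

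For the upper bound, I would invoke the equivalence~\eqref{eq_cbf} to reduce the problem to showing $d_c(S) \ge 1/3$. This then follows from a one-line counting argument: if $A$ is $S$-covering, then every $t \in [-n,n]$ must lie in some translate $S+a$ with $a \in A \cap [-n-\lambda_1-\lambda_2, n]$, and each such $a$ contributes at most $|S|=3$ covered integers. Hence $3|A \cap [-n-\lambda_1-\lambda_2, n]| \ge 2n+1$, so $\underline{d}(A) \ge 1/3$ after letting $n \to \infty$.

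For the lower bound, I would exhibit the explicit set $A = \Z \setminus 3\Z$, which has density exactly $2/3$, and check that it is both $S$-free and $(-S)$-free. The congruence hypothesis $\lambda_1 \equiv \lambda_2 \pmod 3$ together with $\gcd(\lambda_1,\lambda_2) = 1$ forces $\lambda_1 \equiv \lambda_2 \equiv r \pmod 3$ for some $r \in \{1,2\}$, since $r = 0$ would make $3$ a common divisor. Consequently, for every $t \in \Z$ the triple $S+t = \{t, t+\lambda_1, t+\lambda_1+\lambda_2\}$ has residues $\{t, t+r, t+2r\}$ modulo $3$, which forms a complete residue system because $\gcd(r,3) = 1$. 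Thus $S+t$ contains exactly one multiple of $3$ and cannot be a subset of $A$; the identical calculation handles $-S+t$.

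I do not foresee any real obstacle here: the upper bound is the trivial covering estimate coming from $|S|=3$, and the lower bound is an elementary mod-$3$ colouring made to work by the congruence assumption. The substantive content of Theorem~\ref{thm:-S} lies in the cases $\lambda_1 \not\equiv \lambda_2 \pmod 3$, treated elsewhere.
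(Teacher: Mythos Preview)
Your proof is correct and mirrors the paper's argument. The paper uses the construction $A=\{0,1\}+3\Z$, which is just a translate of your $\Z\setminus 3\Z$, and for the upper bound it argues directly that an $S$-free set has upper density at most $2/3$ via a random-translate averaging, which is the dual of your covering-side count routed through~\eqref{eq_cbf}.
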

\begin{proof}
	On the one hand, it is easy to see that the set $A = \{0,1\}+3\Z$ avoids translates of both $S$ and $-S$. Thus, $\frac{2}{3}\le d_f(S,-S) \le d_f(S)$.
	
	On the other hand, it follows by a simple averaging and limiting argument that $d_f(S) \le \frac{2}{3}$. Indeed, assume that the set $A$ has the upper density $\overline{d}(A)>\frac 23$. Take a positive $\varepsilon$ such that $\overline{d}(A)>\frac{2}{3} + \varepsilon$. By definition, there are infinitely many $n\in\N$ such that $\frac{|A\cap[-n,n]|}{2n+1}>\frac{2}{3} + \varepsilon$. Take a random translate $S'$ of $S$ inside such $[-n,n]$. Then the expected intersection of $A$ with $S'$ has size strictly greater than $\frac{2}{3}|S| =2$ whenever $n$ is large enough in terms of $\varepsilon$ and $\textrm{diam}(S)$. In particular, there is a translate of $S$ that is contained in $A$, and thus $A$ is not $S$-free.
\end{proof}

Now we are left with two cases: $\lambda_1 \equiv \lambda_2+1 \pmod 3$ or $\lambda_1 \equiv \lambda_2-1 \pmod 3$. It is actually sufficient to consider only the former one. Indeed, if we are in the latter case, then replace $S$ with its reflection $-S$, and note that $d_f(S) = d_f(-S)$ and $d_f(S,-S) = d_f(-S,S)$. So, from now on we assume that $\lambda_1 \equiv \lambda_2+1 \pmod 3$. In it not hard to check that in this case Theorem~\ref{thm:-S} states that
\begin{equation*}
	d_f(S,-S)=d_f(S)= \frac{\lfloor \frac{2}{3}( 2\lambda_1+\lambda_2)\rfloor}{2\lambda_1+\lambda_2} = \frac{\frac{2}{3}(2\lambda_1+\lambda_2)-\frac{1}{3}}{2\lambda_1+\lambda_2}.
\end{equation*}

The next proposition provides the required lower bound on $d_f(S,-S)$ and thus on $d_f(S)$. The same constructions only for $d_f(S)$ were described in \cite{Schtu}. To check that they are indeed avoid translates of $-S$ as well, we recall them for completeness.

\begin{Proposition} \label{d l1=l2+1}
	Let $\lambda_1, \lambda_2 \in \N$ be two coprime integers such that $\lambda_1 \equiv \lambda_2+1 \pmod{3}$. Then for $S=\{0,\lambda_1,\lambda_1+\lambda_2\}$, we have
	\begin{equation*}
		d_f(S)\geq d_f(S,-S) \ge \frac{\lfloor \frac{2}{3}( 2\lambda_1+\lambda_2)\rfloor}{2\lambda_1+\lambda_2}.
	\end{equation*}
\end{Proposition}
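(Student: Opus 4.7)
Set $n := 2\lambda_1 + \lambda_2$. Since $\lambda_1 \equiv \lambda_2+1 \pmod 3$, we have $n \equiv 2 \pmod 3$, so the target bound $\lfloor 2n/3\rfloor / n$ simplifies to $(2n-1)/(3n)$. The plan is to construct an $n$-periodic subset $A \subset \Z$ of exactly this density that avoids every translate of both $S$ and $-S$.

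The key observation driving the proof is that $\lambda_1 + \lambda_2 = n - \lambda_1$, hence
\[
S \equiv \{0,\lambda_1,-\lambda_1\} \equiv -S \pmod n.
\]
Consequently, for a periodic set $A = A_0 + n\Z$ with $A_0 \subset \Z/n\Z$, avoiding translates of both $S$ and $-S$ in $\Z$ is equivalent to $A_0$ containing no $3$-term arithmetic progression with common difference $\lambda_1$ modulo $n$. Since $\gcd(\lambda_1,n) = \gcd(\lambda_1,\lambda_2) = 1$, multiplication by $\lambda_1^{-1} \pmod n$ turns this into a cleaner combinatorial task: construct a subset $A_1 \subset \Z/n\Z$ of size $(2n-1)/3$ containing no three cyclically consecutive residues.

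For the construction, I would take $A_1 := \{i \in \{0,\ldots,n-1\} : i \not\equiv 0 \pmod 3\}$. Writing $n = 3k+2$, the multiples of $3$ in $\{0,\ldots,n-1\}$ are exactly $\{0,3,\ldots,3k\}$, so $|A_1| = n - (k+1) = (2n-1)/3$ as required. Every three cyclically consecutive residues in $\{0,\ldots,n-1\}$ contain some multiple of $3$; the only non-obvious cases are the two wraparound windows $\{n-2,n-1,0\}$ and $\{n-1,0,1\}$, each of which contains either $0$ or $n-2=3k$. Setting $A_0 := \lambda_1 A_1 \pmod n$ and $A := A_0 + n\Z$ then yields a set of density $(2n-1)/(3n)$ with the desired avoidance property.

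No step here constitutes a serious obstacle: the crux is recognising that modulo the right period $n = 2\lambda_1+\lambda_2$ the sets $S$ and $-S$ collapse onto a common symmetric three-term arithmetic progression, after which the problem reduces cleanly to the classical question of the densest cyclic set in $\Z/n\Z$ with no three consecutive elements, whose extremal construction is standard. The arithmetic condition $\lambda_1 \equiv \lambda_2+1 \pmod 3$ plays the role of ensuring $n \equiv 2 \pmod 3$, which is precisely the residue class where this cyclic construction is most efficient.
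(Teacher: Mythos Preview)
Your proof is correct and follows essentially the same route as the paper: both work modulo the period $n=2\lambda_1+\lambda_2$, use $\gcd(\lambda_1,n)=1$ to identify cyclic translates of $S$ and $-S$ with triples of three consecutive multiples of $\lambda_1$, and then exhibit a subset of $\Z/n\Z$ of size $\lfloor 2n/3\rfloor$ with no three cyclically consecutive elements. Your choice $A_1=\{i:i\not\equiv 0\pmod 3\}$ is a minor (and arguably cleaner) variant of the paper's explicit set $I$; the arguments are otherwise the same.
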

\begin{proof} We say that a set $X\subset \Z_m$ is {\it cyclically $S$-free} if there are no three points $x,y,z\in X$  that satisfy $y-x \equiv \lambda_1\pmod{m},$ $z-y\equiv \lambda_2\pmod{m}$.
	We will show that there is a set $X\subset \Z_{2\lambda_1+\lambda_2}$ of cardinality $\left\lfloor \frac{2}{3}( 2\lambda_1+\lambda_2)\right\rfloor$ that is both cyclically $S$- and $(-S)$-free. This will imply the proposition since we can consider a periodic set $A = X+(2\lambda_1+\lambda_2)\Z$ of the required density that clearly avoids translates of both $S$ and $-S$.
	
	Note that $\gcd(\lambda_1, 2\lambda_1+\lambda_2) = \gcd(\lambda_1, \lambda_2) = 1$. Therefore, the additive subgroup $\langle \lambda_1\rangle \subset \Z_{2\lambda_1+\lambda_2}$ generated by the residue $\lambda_1$ is isomorphic to $\Z_{2\lambda_1+\lambda_2}$. Besides, we have $\lambda_2 \equiv -2\lambda_1 \pmod{2\lambda_1+\lambda_2}$. Hence, both cyclic translates of $S$ and $-S$ always cover three consecutive elements of $\langle \lambda_1\rangle$. 
	
	Now it is clear how to construct the desired set $X \subset \Z_{2\lambda_1+\lambda_2}$ that is both cyclically $S$- and $(-S)$-free: put $X = \{i\lambda_1: i\in I\}$, where $I\subset \Z_{2\lambda_1+\lambda_2}$ is a set that has no $3$ cyclically consecutive elements, e.g., $$I= \{0,1,\ 3,4,\ 6,7, \ \ldots \ , 2\lambda_1+\lambda_2-5,2\lambda_1+\lambda_2-4, \ 2\lambda_1+\lambda_2-2\}.$$ 
	It is not hard do check that $|X|=|I|=\left\lfloor \frac{2}{3}( 2\lambda_1+\lambda_2)\right\rfloor$, as required.
\end{proof}

It only remains  to prove the upper bound
\begin{equation} \label{eq:3batonupper}
	d_f(S) \le \frac{\lfloor \frac{2}{3}( 2\lambda_1+\lambda_2)\rfloor}{2\lambda_1+\lambda_2},
\end{equation}
where $S=\{0,\lambda_1,\lambda_1+\lambda_2\}$ and $\lambda_1, \lambda_2$ are coprime integers such that $\lambda_1 \equiv \lambda_2+1 \pmod{3}$.
The following two propositions incorporate a large part of the proof.

\begin{Proposition} \label{average}
	Let $A \subset \Z$ be an infinite set and $a,x,y \in \N$ be integers. Given $t \in \Z$, we define the interval $I(t) = [x]+t$ of length $x$ that begins at $t+1$. Assume that for all $t \in \Z$, we have either $|A\cap I(t)| \le a$, or $|A\cap I(t)|+|A\cap I(y+t)| \le 2a$. Then the upper density of $A$ satisfies $\overline{d}(A) \le \frac{a}{x}$.
\end{Proposition}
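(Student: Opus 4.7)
The plan is to run a standard double-counting argument, with a twist that handles ``heavy'' intervals (those with more than $a$ points of $A$) by pairing them with their $y$-shifts. Set $f(t) = |A \cap I(t)|$ and $h(t) = f(t) - a$. The hypothesis rewrites as: for every $t$, either $h(t) \le 0$, or $h(t) + h(t+y) \le 0$; equivalently, $h(t) > 0$ forces $h(t+y) \le -h(t) < 0$. So a heavy $t$ is necessarily followed (at distance $y$) by a light index whose negative surplus cancels that of $t$.

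First I would sum $f(t)$ over $t \in \{-N,\ldots,N\}$ for large $N$. Each $m \in A$ lies in exactly $x$ of these intervals provided $m$ is not within $x$ of either endpoint, hence
\[
\sum_{t=-N}^{N} f(t) \;\ge\; x \cdot |A \cap [-N+x,\, N]| \;-\; O(1).
\]
It thus suffices to show $\sum_{t=-N}^{N} h(t) \le C$ for an absolute constant $C$, because then $\sum f(t) \le a(2N+1) + O(1)$, and dividing by $2N+1$ and passing to the $\limsup$ gives $\overline{d}(A) \le a/x$.

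The heart of the argument is the pairing. Let $H = \{t \in [-N,N] : h(t) > 0\}$ and $L = [-N,N] \setminus H$. Split $H$ into the interior part $H_1 = H \cap [-N,\,N-y]$ and the boundary part $H_2 = H \setminus H_1$, of size at most $y$. The shift $t \mapsto t+y$ is injective, maps $H_1$ into $[-N+y,\,N]$, and by the hypothesis lands entirely inside $L$ (a shifted-heavy index has $h<0$, so it is not heavy). Hence $H_1$ and $H_1+y$ are disjoint subsets of $[-N,N]$, and grouping each $t \in H_1$ with its shift gives $\sum_{t\in H_1}\bigl(h(t)+h(t+y)\bigr) \le 0$. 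The remaining indices in $L \setminus (H_1+y)$ all satisfy $h \le 0$, while $H_2$ contributes at most $y(x-a)$ using the trivial bound $h(t) \le x-a$. Combining,
\[
\sum_{t=-N}^{N} h(t) \;\le\; y(x-a),
\]
an $N$-independent constant, as needed.

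The main obstacle is purely bookkeeping: verifying that the shift map is injective and, crucially, lands in $L$ (both immediate from the hypothesis), and checking that the boundary losses in the double-counting step and in the $H_2$ contribution are genuinely controlled by the fixed parameters $x$ and $y$. Once these are in place, letting $N \to \infty$ yields $\overline{d}(A) \le a/x$ cleanly.
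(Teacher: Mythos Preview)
Your argument is correct. Both your proof and the paper's rest on the same double-counting skeleton: sum $f(t)=|A\cap I(t)|$ over a long window, bound it from below by $x$ times the number of points of $A$, and bound it from above using the hypothesis to average out heavy intervals against their $y$-shifts. The packaging differs, though. The paper argues by contradiction, splits the window into $y$ arithmetic progressions (one for each residue $z\in[y]$), and within each progression runs a sequential greedy scheme: process $J_i(z)$ alone if it is light, otherwise pair it with $J_{i+1}(z)$, then move on. You instead work directly, without residue classes or contradiction: you isolate the heavy set $H$, observe that the shift $t\mapsto t+y$ injects $H_1$ into the light set $L$ (since $h(t)>0$ forces $h(t+y)<0$), and cancel in one stroke. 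This global injection is a cleaner realisation of the same idea and yields an explicit $N$-independent bound $\sum h(t)\le y(x-a)$; the paper's greedy version is slightly more hands-on but amounts to the same cancellation carried out progression by progression.
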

\begin{proof}	
	We argue indirectly. Assume that $\overline{d}(A) > \frac{a}{x}$. Then we fix an arbitrary $\varepsilon>0$ such that $\overline{d}(A) > \frac{a}{x}+\varepsilon$. By the definition of the upper density, there are infinitely many $n \in \N$ such that $|A\cap [-n,n]| > \left(\frac{a}{x}+\varepsilon\right)(2n+1)$. Putting $J\coloneqq[-n+x+y,n-x-y]$ for sufficiently large $n$, this implies that
	\begin{equation*}
		|A\cap J| \ge |A\cap [-n,n]|-2(x+y) > \left(\frac{a}{x}+\varepsilon\right)(2n+1) -2(x+y) > \left(\frac{2a}{x}+\varepsilon\right)n.
	\end{equation*}

	Given $z \in [y],i \in \N$, we set $J_i(z) = [x]-n-2-y+iy+z$. Choose $m$ to be the largest integer such that $J_{m}(y)\subset [-n,n]$, i.e., that $x+my \le 2n+2$. Note that $m\le \frac{2n+1}{y}$. We estimate the sum
	\begin{equation*}
		\sum_{z=1}^{y}\sum_{i=1}^{m} |A\cap J_i(z)|.
	\end{equation*}

	On the one hand, observe that any point of $A\cap J$ is a member of exactly $x$ intervals $J_i(z)$, $z \in [y], i \in [m]$. Hence,
	\begin{equation} \label{eq average lower}
		\sum_{z=1}^{y}\sum_{i=1}^{m} |A\cap J_i(z)| \ge x|A\cap J| > (2a+\varepsilon x)n \ge (2a+\varepsilon)n.
	\end{equation}
	
	On the other hand, we may fix $z \in [y]$ and apply the hypothesis of the proposition to $J_1(z)$ and $J_2(z)=J_1(z)+y$. We get that either $|A\cap J_1(z)|\le a$, or $|A\cap J_1(z)|+|A\cap J_2(z)|\le 2a$. In the former case, we apply the hypothesis of the proposition to $J_2(z),J_3(z)$, and in the latter case, we apply it to $J_3(z), J_4(z).$ Continuing in the same vein up while there are at least $2$ unvalued terms in the sum, we get that 
	\begin{equation*}
		\sum_{i=1}^{m} |A\cap J_i(z)|\le (m-1)a+x < ma+x.
	\end{equation*}
	Therefore,
	\begin{equation}  \label{eq average upper}
		\sum_{z=1}^{y}\sum_{i=1}^{m} |A\cap J_i(z)| < may+xy \le \left(\frac{2n+1}{y}\right)ay+xy = 2an+(a+xy) < (2a+\varepsilon)n,
	\end{equation}
	provided $n$ is large enough. 
	
	Finally, note that inequalities~\eqref{eq average lower} and~\eqref{eq average upper} contradict each other.
\end{proof}

\begin{Proposition}\label{prop:3batonupperv2}
	Let $\lambda_1,\lambda_2 \in \N$ be two coprime integers such that $\lambda_1 \equiv \lambda_2+1 \pmod{3}$. Set $S=\{0,\lambda_1,\lambda_1+\lambda_2\}$. Let $A\subset \mathbb{Z}$ be an $S$-free subset. For each $t\in \mathbb Z$, consider two intervals $I_1(t)=[2\lambda_1+\lambda_2]+t$ and $I_2(t)=[2\lambda_1+\lambda_2]+\lambda_1+\lambda_2+t$, each of length $2\lambda_1+\lambda_2$. Then $$2|I_1(t)\!\setminus\! A|+|I_2(t)\!\setminus\! A|\geq 2\lambda_1+\lambda_2.$$
\end{Proposition}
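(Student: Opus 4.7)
The plan is to establish the inequality via a weighted double-counting argument that exploits the cyclic structure modulo $N := 2\lambda_1 + \lambda_2$. Since $\gcd(\lambda_1, N) = \gcd(\lambda_1, \lambda_2) = 1$ and $\lambda_1 + \lambda_2 \equiv -\lambda_1 \pmod{N}$, multiplication by $\lambda_1$ identifies $\Z/N\Z$ with a cyclic order under which every translate of $S$ becomes three consecutive residues. The interval $I_1(t) \cup I_2(t)$ has length $N + \lambda_1 + \lambda_2$, so exactly $\lambda_1$ residues modulo $N$ (namely those of $I_1 \cap I_2$) have a single representative in $I_1 \cup I_2$, while each of the remaining $\lambda_1 + \lambda_2$ residues has two representatives, one in $I_1 \setminus I_2$ and one in $I_2 \setminus I_1$.

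I would define the weight $f(q) := 2\mathbf{1}[q\in I_1] + \mathbf{1}[q\in I_2] \in \{1,2,3\}$, so that the proposition is equivalent to $\sum_{q\in A} f(q) \le 2N$. Exactly $N$ triples $T_p = \{p, p+\lambda_1, p+\lambda_1+\lambda_2\}$ fit entirely in $I_1 \cup I_2$, one for each $p \in I_1(t)$, and each satisfies $|T_p \cap A|\le 2$ by $S$-freeness. Summing over $p$ yields the coarse bound $\sum_{q\in A} n(q) \le 2N$, where $n(q) := |\{p\in I_1 : q\in T_p\}| \in \{1,2,3\}$. A direct calculation on the five natural subintervals of $I_1 \cup I_2$ shows that $n(q) = f(q)$ everywhere except on two boundary strips of length $\lambda_1$: a leftmost strip $R_{\mathrm{L}} \subset I_1 \setminus I_2$, where $f - n = +1$, and a rightmost strip $R_{\mathrm{R}} \subset I_2 \setminus I_1$, where $f - n = -1$. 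Consequently the coarse inequality only gives $\sum_{q\in A} f(q) \le 2N + |A\cap R_{\mathrm{L}}| - |A\cap R_{\mathrm{R}}|$, which may strictly exceed $2N$ when $A$ concentrates on the left end of $I_1 \cup I_2$.

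The crux, and main obstacle, is to exploit the integrality of $A$ to cancel this boundary surplus; this is a necessity rather than a convenience, since a small example already shows that the LP relaxation (with $x_q \in [0,1]$ replacing $[q\in A]$) has optimum strictly greater than $2N$, so no purely linear combination of the triple constraints suffices. I plan to partition $I_1$ into three subintervals of lengths $\lambda_1, \lambda_2, \lambda_1$ according to whether the second and third elements of $T_p$ land in $I_1$ or in $I_2 \setminus I_1$; this produces three types of triples with distinct weight signatures. For each $q\in A\cap R_{\mathrm{L}}$ I would then trace its orbit $q, q+\lambda_1, q+2\lambda_1,\ldots$ through the triples in cyclic order (which traverses every residue class by coprimality) until hitting either a triple $T_p$ with $|T_p\cap A| \le 1$ (providing one unit of slack in $\sum_p|T_p\cap A|$) or an element of $A\cap R_{\mathrm{R}}$ (whose lower weight pays for the deficit). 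The delicate step is ensuring that the resulting pairing is globally injective across $A\cap R_{\mathrm{L}}$; this requires tracking how the three triple types interleave along the $\lambda_1$-orbit and applying the $S$-free condition at each step, and it is precisely here that the intricacy of the double counting promised in the introduction enters.
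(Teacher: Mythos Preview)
Your setup is exactly the paper's: the same five-interval decomposition (your $R_{\mathrm L}$ and $R_{\mathrm R}$ are the paper's $E_1=[1,\lambda_1]$ and $E_4=[2\lambda_1+\lambda_2+1,3\lambda_1+\lambda_2]$), the same coarse count $\sum_p |T_p\cap A|\le 2N$, and the same identification of the defect $\beta_1-\beta_4:=|A\cap R_{\mathrm L}|-|A\cap R_{\mathrm R}|$ as the sole obstruction. Your LP remark is also correct and to the point. The gap is that your proposed fix --- tracing $\lambda_1$-orbits from each $q\in A\cap R_{\mathrm L}$ until you meet slack or $A\cap R_{\mathrm R}$, then arguing global injectivity --- is left entirely as a sketch, and it is not clear why the walk should terminate usefully, nor why distinct starting points would not collide on the same slack triple. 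This is the whole content of the proposition, and it is not done.

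The paper closes this gap without any orbit chasing. Working in the complement (put $\alpha_i=|E_i\setminus A|$), it assigns each of the $N$ triples $T_p$ to the \emph{leftmost} element of $T_p\setminus A$. An element $x\in E_4\setminus A$ then receives \emph{both} of its two triples only when $\{x-\lambda_1-\lambda_2,\,x-\lambda_1\}\subset A$; call the set of such $x$'s $X$ and $\alpha_4'=|X|$. But for $x\in X$ the triple $\{x-N,\,x-\lambda_1-\lambda_2,\,x-\lambda_1\}$ is a copy of $S$ with its last two points in $A$, so $S$-freeness forces $x-N\in E_1\setminus A$. This single-step injection gives $\alpha_1\ge\alpha_4'$, and together with the leftmost-assignment count $\alpha_1+2\alpha_2+3\alpha_3+2\alpha_4'+\alpha_4''+\alpha_5\ge N$ it yields $2\alpha_1+2\alpha_2+3\alpha_3+\alpha_4+\alpha_5\ge N$, which in your dual variables is precisely $\beta_1-\beta_4\le\mathrm{slack}$. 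So the ``intricate'' step you anticipated is a one-line injection $x\mapsto x-N$, enabled by the leftmost-element bookkeeping; your orbit plan is aimed at the right target but is much heavier than what is needed.
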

\begin{proof}
Since $A\subset \Z$ is $S$-free if and only if $A-t\subset \Z$ is $S$-free, it is sufficient to show the desired inequality only for $t=0$. In this case $I_1 \coloneqq I_1(0) = [2\lambda_1+\lambda_2]$ and $I_2 \coloneqq I_2(0) = [2\lambda_1+\lambda_2]+\lambda_1+\lambda_2$.

We start the proof by splitting $I_1\cup I_2$ into five disjoint intervals:
\begin{align*}
E_1 & = [1,\lambda_1],\\[3pt]
E_2&=[\lambda_1+1,\lambda_1+\lambda_2],\\[3pt]
E_3&=[\lambda_1+\lambda_2+1,2\lambda_1+\lambda_2],\\[3pt]
E_4&=[2\lambda_1+\lambda_2+1,3\lambda_1+\lambda_2],\\[3pt]
E_5&=[3\lambda_1+\lambda_2+1,3\lambda_1+2\lambda_2].
\end{align*}

For each $i \in [5],$ let us put $\alpha_i \coloneqq |E_i\!\setminus\! A|.$ Note that any element from $E_1,E_2,E_3,E_4,E_5$ is contained respectively in $1,2,3,2,1$ translates $S'\subset I_1 \cup I_2$ of $S$. Moreover, let us put $\alpha'_4 \coloneqq |X|$, where $X\subset E_4\!\setminus\! A$ is the set of the elements  $x\in E_4\!\setminus\! A$ such that $\{x-\lambda_1-\lambda_2,x-\lambda_1\}\subset A$, and put  $\alpha''_4 \coloneqq |(E_4\!\setminus\!A)\!\setminus\!X|=\alpha_4-\alpha'_4.$

For each translate $S'\subset I_1\cup I_2$ of $S$, we correspond the leftmost element of $S'\!\setminus\! A$. (Recall that $S'\!\setminus\! A$ is not an empty set since $A$ is $S$-free.) Note that every element $x\in E_4$ is contained in two such translates: $\{x-\lambda_1,x,x+\lambda_2\}$ and $\{x-\lambda_1-\lambda_2,x-\lambda_2,x\}$. Thus if $x\in E_4\!\setminus\!A$ is corresponding to both these translates, then we have $x \in X$ by construction.

Let us count, how many translates are possibly corresponding to an element from $E_i\!\setminus\!A$, for different $i.$ It should be clear that for $i=1,2,3,4,5$, an element from $E_i\!\setminus\!A$ is corresponding to at most $1,2,3,2,1$ translates, respectively. Moreover, recall that any element from $(E_4\!\setminus\!A)\!\setminus\!X$ is corresponding to at most one translate. Since there are $2\lambda_1+\lambda_2$ translates of $S$ inside $I_1\cup I_2$ in total, we get the following inequality:
$$\alpha_1+2\alpha_2+3\alpha_3+2\alpha'_4+\alpha''_4+\alpha_5\ge 2\lambda_1+\lambda_2.$$

Let us show that $\alpha_1\ge \alpha'_4.$ Indeed, if $x\in E_4\!\setminus\!A$ contributes to $\alpha'_4,$ then $\{x-\lambda_1-\lambda_2,x-\lambda_1\}\subset A$. Thus, $y(x) \coloneqq x-2\lambda_1-\lambda_2\notin A$ since $A$ is $S$-free. Note that $y(x)\in E_1\!\setminus\!A$ for any $x \in X$. 
Therefore, we constructed an injection from $X$ to $E_1\!\setminus\!A$, and thus $\alpha_1\ge \alpha_4'.$ Finally, we conclude that
\begin{align*}2|I_1\!\setminus\!A|+|I_2\!\setminus\!A|\ =&\ 2(\alpha_1+\alpha_2+\alpha_3)+ (\alpha_3+\alpha_4+\alpha_5)\\
\ \ge&\ \alpha_1+2\alpha_2+3\alpha_3+\alpha'_4+\alpha_4+\alpha_5\\
\ =&\ \alpha_1+2\alpha_2+3\alpha_3+2\alpha_4'+\alpha''_4+\alpha_5\\
\ \ge&\ 2\lambda_1+\lambda_2,
\end{align*}
as required.
\end{proof}

Proposition~\ref{prop:3batonupperv2} has the following corollary.

\begin{Corollary}\label{coraver}
    In the notation of Proposition~\ref{prop:3batonupperv2}, at least one of the following holds:    \begin{align*}
    	|A\cap I_1(t)|\ \le\ & \left\lfloor \frac{2}{3}( 2\lambda_1+\lambda_2)\right\rfloor;\\
		|A\cap I_1(t)|+|A\cap I_2(t)|\ \le\ 2&\left\lfloor \frac{2}{3}( 2\lambda_1+\lambda_2)\right\rfloor.
	\end{align*}
\end{Corollary}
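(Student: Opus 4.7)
My plan is to convert the inequality of Proposition~\ref{prop:3batonupperv2} from complements to intersections, use the congruence hypothesis on $\lambda_1,\lambda_2$ to identify $\lfloor \tfrac{2}{3}(2\lambda_1+\lambda_2)\rfloor$ explicitly, and then do a short case split.

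First I set $L:=2\lambda_1+\lambda_2$ and $m:=\lfloor \tfrac{2}{3}L\rfloor$, and write $a_i:=|A\cap I_i(t)|$ for $i=1,2$. Since each interval $I_i(t)$ has length $L$, Proposition~\ref{prop:3batonupperv2} rewrites as
\begin{equation*}
2(L-a_1)+(L-a_2)\ \ge\ L,\qquad\text{i.e.,}\qquad 2a_1+a_2\ \le\ 2L.
\end{equation*}
Next I pin down $m$: the hypothesis $\lambda_1\equiv \lambda_2+1\pmod 3$ gives $L=2\lambda_1+\lambda_2\equiv 2(\lambda_2+1)+\lambda_2\equiv 2\pmod 3$, hence $2L\equiv 1\pmod 3$ and so $m=(2L-1)/3$, i.e., $2L=3m+1$. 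Substituting back yields the clean inequality $2a_1+a_2\le 3m+1$.

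Now the case split is immediate. If $a_1\le m$, the first alternative in the corollary holds and we are done. Otherwise $a_1\ge m+1$, and then
\begin{equation*}
a_1+a_2\ \le\ a_1+(3m+1-2a_1)\ =\ 3m+1-a_1\ \le\ 3m+1-(m+1)\ =\ 2m,
\end{equation*}
which is the second alternative. I do not anticipate any real obstacle here: all the substantive combinatorial work is already absorbed into Proposition~\ref{prop:3batonupperv2}, and the only moving parts in the corollary are a linear rearrangement and a residue computation modulo $3$ to evaluate the floor.
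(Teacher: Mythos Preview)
Your argument is correct and essentially matches the paper's: both convert Proposition~\ref{prop:3batonupperv2} from complements to intersections (yielding $2a_1+a_2\le 2L$), use the congruence $\lambda_1\equiv\lambda_2+1\pmod 3$ to get $L\equiv 2\pmod 3$ and hence $\lfloor 2L/3\rfloor=(2L-1)/3$, and then finish with trivial arithmetic. The only cosmetic difference is that the paper argues by contradiction while you do a direct case split on $a_1\le m$ versus $a_1\ge m+1$.
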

\begin{proof}
    Assume that neither of the conclusions is valid. Then \begin{align*}
        2|I_1(t)\!\setminus\!A|+|I_2(t)\!\setminus\!A|\ =&\ 3(2\lambda_1+\lambda_2)-|A\cap I_1(t)|-(|A\cap I_1(t)|+|A\cap I_2(t)|)\\
        \ \le &\ 3(2\lambda_1+\lambda_2)-3\left\lfloor \frac{2}{3}( 2\lambda_1+\lambda_2)\right\rfloor-2\\
        \ =&\  3(2\lambda_1+\lambda_2) - 3\left(\frac{2}{3}( 2\lambda_1+\lambda_2)-\frac{1}{3}\right) - 2 \\
        \ =&\ 2\lambda_1+\lambda_2-1,
    \end{align*}
which contradicts the conclusion of Proposition~\ref{prop:3batonupperv2}.
\end{proof}

Now we finish the proof of the inequality~\eqref{eq:3batonupper}. 
Let $A\subset\Z$ be an $S$-free set. 
It follows from Corollary \ref{coraver} that $A$ satisfies the conditions of Proposition \ref{average} with
\begin{equation*}
	a = \left\lfloor \frac{2}{3}( 2\lambda_1+\lambda_2)\right\rfloor,  \ \ x = 2\lambda_1+\lambda_2, \ \ y = \lambda_1+\lambda_2.
\end{equation*}
Thus, 
Proposition~\ref{average} guarantees that
\begin{equation*}
	\overline{d}(A) \le \frac{\left\lfloor \frac{2}{3}( 2\lambda_1+\lambda_2)\right\rfloor}{2\lambda_1+\lambda_2}
\end{equation*}
and finishes the proof of the inequality~\eqref{eq:3batonupper} and hence of Theorem~\ref{thm:-S} as well.

Finally, recall that as we mentioned in the introduction, Theorem~\ref{thm:-S} together with the equality~\eqref{eq_cbf} yields the statement of Theorem~\ref{thm:3baton} for coverings.

\section{Proof of Theorem~\ref{thm:3baton} for packings}\label{sec:pack}

As in the proof of Theorem~\ref{thm:-S}, the case $\lambda_1 \equiv \lambda_2 \pmod 3$ is  trivial and the other two cases are `symmetric'. So, in what follows let us assume that $\lambda_1, \lambda_2 \in \N$ are coprime integers such that $\lambda_1 \equiv \lambda_2-1 \pmod 3$ and $S=\{0,\lambda_1,\lambda_1+\lambda_2\}$. 

The lower bound on $d_p(S)$ is via a direct example similar to the construction from Proposition~\ref{d l1=l2+1}: it is not hard to check that the periodic set
\begin{equation*}
	\Big\{i\lambda_1: 1 \le i \le 2\lambda_1+\lambda_2, \ i \equiv 0 \!\!\! \pmod{3}\Big\}+(2\lambda_1+\lambda_2)\Z
\end{equation*}
is $S$-packing with density $\frac{\lfloor \frac{1}{3}( 2\lambda_1+\lambda_2)\rfloor}{2\lambda_1+\lambda_2}$.

Thus, it remains only to show the upper bound.  We start by making a simple observation.

\begin{Lemma} \label{lemma:packing}
	Let $\lambda_1,\lambda_2 \in \N$ be two integers. Set $S=\{0,\lambda_1,\lambda_1+\lambda_2\}$. Let $A\subset \mathbb{Z}$ be an $S$-packing set. Then $A$ intersects each translate of $-S$ in at most one point.
\end{Lemma}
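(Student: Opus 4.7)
The proof should be a short direct argument by contradiction, leveraging the fact that $S$ and $-S$ have the same multiset of positive pairwise differences.

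My plan is as follows. Suppose for the sake of contradiction that some translate $-S+t = \{t-\lambda_1-\lambda_2,\ t-\lambda_1,\ t\}$ meets $A$ in two distinct points $a_1 < a_2$. Then the positive difference $d \coloneqq a_2-a_1$ must lie in the set of positive pairwise differences of $-S$, namely $\{\lambda_1,\ \lambda_2,\ \lambda_1+\lambda_2\}$.

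The key observation, which I would then point out, is that this is exactly the set of positive pairwise differences of $S=\{0,\lambda_1,\lambda_1+\lambda_2\}$ as well (since the difference set of a set and of its reflection always coincide up to sign). Consequently, for any such $d$ there exist $s,s'\in S$ with $s-s'=d$, so that $a_1+s=a_2+s'$. This means the translates $S+a_1$ and $S+a_2$ share the common element $a_1+s$, contradicting the assumption that $A$ is $S$-packing.

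If desired, I would spell out the three cases explicitly for clarity: $d=\lambda_1$ gives the common point $a_1+\lambda_1=a_2+0$; $d=\lambda_2$ gives $a_1+\lambda_1+\lambda_2=a_2+\lambda_1$; and $d=\lambda_1+\lambda_2$ gives $a_1+\lambda_1+\lambda_2=a_2+0$. There is no real obstacle in this argument — it is a straightforward check that the three possible gaps forced by $-S+t$ coincide with intra-$S$ displacements and hence produce a collision between two $S$-translates centered at points of $A$.
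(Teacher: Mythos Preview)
Your proof is correct and follows essentially the same idea as the paper: two distinct elements $a_1,a_2$ of a translate of $-S$ force the translates $S+a_1$ and $S+a_2$ to intersect. The paper avoids your case split by noting that for $S'=\{x-\lambda_1-\lambda_2,\,x-\lambda_1,\,x\}$ the rightmost point $x$ lies in $S+a$ for every $a\in S'$, so all three translates share the single point $x$.
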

\begin{proof}
	Given $x \in \Z$, let us consider a translate $S'=\{x-\lambda_1-\lambda_2, x-\lambda_1, x\}$  of $-S$. Note that $x \in S+a$ for all $a \in S'$. Recall that translates $S+a_1$ and $S+a_2$ must be disjoint for all distinct $a_1,a_2 \in A$. Therefore, at most one element of $S'$ belongs to $A$.
\end{proof}

\begin{Proposition}\label{prop:3packingupper}
	Let $\lambda_1,\lambda_2 \in \N$ be two coprime integers such that $\lambda_1 \equiv \lambda_2-1 \pmod 3$. Set $S=\{0,\lambda_1,\lambda_1+\lambda_2\}$. Let $A\subset \mathbb{Z}$ be an $S$-packing set. For each $t\in \mathbb Z$, consider two intervals $I_1(t)=[2\lambda_1+\lambda_2]+t$ and $I_2(t)=[2\lambda_1+\lambda_2]+\lambda_1+\lambda_2+t$, each of length $2\lambda_1+\lambda_2$. Then
	\begin{equation*}
		2|I_1(t)\!\setminus\!A|+|I_2(t)\!\setminus\!A|\geq 4\lambda_1+2\lambda_2.
	\end{equation*}
\end{Proposition}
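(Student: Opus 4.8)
My plan is to mirror the double-counting behind Proposition~\ref{prop:3batonupperv2}, but with the roles of $S$ and $-S$ interchanged, since here the usable constraint is the one from Lemma~\ref{lemma:packing}: every translate of $-S$ meets $A$ in at most one point, hence contains at least two points of its complement. By translation invariance of the $S$-packing property it is enough to treat $t=0$, so I write $I_1=[1,2\lambda_1+\lambda_2]$ and $I_2=[\lambda_1+\lambda_2+1,3\lambda_1+2\lambda_2]$, with $I_1\cup I_2=[1,3\lambda_1+2\lambda_2]$. The translates of $-S$ contained in this window are exactly the sets $\{b,b+\lambda_2,b+\lambda_1+\lambda_2\}$ for $b\in[1,2\lambda_1+\lambda_2]$, so there are $2\lambda_1+\lambda_2$ of them. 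Because the gap pattern of $-S$ is $(\lambda_2,\lambda_1)$ rather than $(\lambda_1,\lambda_2)$, I split the window into five blocks adapted to $-S$, namely $F_1=[1,\lambda_2]$, $F_2=[\lambda_2+1,\lambda_1+\lambda_2]$, $F_3=[\lambda_1+\lambda_2+1,2\lambda_1+\lambda_2]$, $F_4=[2\lambda_1+\lambda_2+1,2\lambda_1+2\lambda_2]$, and $F_5=[2\lambda_1+2\lambda_2+1,3\lambda_1+2\lambda_2]$. Setting $\beta_i=|F_i\setminus A|$, one has $I_1=F_1\cup F_2\cup F_3$ and $I_2=F_3\cup F_4\cup F_5$, so that $2|I_1\setminus A|+|I_2\setminus A|=2\beta_1+2\beta_2+3\beta_3+\beta_4+\beta_5$, and a point lying in $F_i$ belongs to exactly $1,2,3,2,1$ of the window translates for $i=1,\dots,5$.

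Next I would double count incidences between the window translates and missing points. Each window translate has at least two points in the complement of $A$, so I assign to every window translate its two leftmost missing points; this produces $2(2\lambda_1+\lambda_2)$ assignments in total. Counting these from the side of the missing points, a missing point in $F_1,F_2,F_3,F_5$ is assigned at most $1,2,3,1$ times respectively (at most once per translate containing it). The delicate block is $F_4$: a missing point $p\in F_4$ lies in the two translates $\{p-\lambda_2,p,p+\lambda_1\}$ and $\{p-\lambda_1-\lambda_2,p-\lambda_1,p\}$; it is always among the two leftmost missing points of the first (only one element precedes it there), but it is among the two leftmost missing of the second only when at least one of $p-\lambda_1-\lambda_2,p-\lambda_1$ lies in $A$. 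Splitting $\beta_4=\beta_4'+\beta_4''$ according to whether such a present left-neighbour exists, the incidence count yields $\beta_1+2\beta_2+3\beta_3+2\beta_4'+\beta_4''+\beta_5\ge 2(2\lambda_1+\lambda_2)$.

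Finally I would compare this with the target quantity: since $2\beta_1+2\beta_2+3\beta_3+\beta_4+\beta_5=(\beta_1+2\beta_2+3\beta_3+2\beta_4'+\beta_4''+\beta_5)+(\beta_1-\beta_4')$, it suffices to prove $\beta_1\ge\beta_4'$. This is the crux, and I expect it to be the main obstacle; I would establish it by the shift injection $p\mapsto p-(2\lambda_1+\lambda_2)$, which sends $F_4$ into $F_1$. If $p\in F_4\setminus A$ has a present left-neighbour, then either $p-\lambda_1\in A$, in which case $p-(2\lambda_1+\lambda_2)$ and $p-\lambda_1$ differ by $\lambda_1+\lambda_2$, or $p-\lambda_1-\lambda_2\in A$, in which case $p-(2\lambda_1+\lambda_2)$ and $p-\lambda_1-\lambda_2$ differ by $\lambda_1$; in either case the $S$-packing property (differences $\lambda_1$ and $\lambda_1+\lambda_2$ are forbidden in $A$, by Lemma~\ref{lemma:packing}) forces $p-(2\lambda_1+\lambda_2)\notin A$. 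Thus the injection lands in $F_1\setminus A$, giving $\beta_4'\le\beta_1$ and hence $2|I_1\setminus A|+|I_2\setminus A|\ge 2(2\lambda_1+\lambda_2)=4\lambda_1+2\lambda_2$, as desired. The part I would double-check most carefully is the block-multiplicity table and the exact criterion (``a present left-neighbour'') defining $\beta_4'$, since it must simultaneously match the refined incidence bound and make the shift injection land outside $A$.
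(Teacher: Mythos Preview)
Your proposal is correct and follows essentially the same route as the paper's proof: the same reduction to $t=0$, the same five-block partition $F_i=E_i$, the same definition of the exceptional subset of $F_4$ (your ``present left-neighbour'' condition is exactly the paper's set $X$), the same shift injection $p\mapsto p-(2\lambda_1+\lambda_2)$ to get $\beta_1\ge\beta_4'$, and the same final chain of inequalities. The only cosmetic difference is that you fix the assignment rule as ``the two leftmost missing points,'' whereas the paper assigns arbitrarily except on the translates ending in $(E_4\setminus A)\setminus X$; both choices yield the identical incidence bound.
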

This proposition is very similar to Proposition~\ref{prop:3batonupperv2} and it was tempting for us to try and turn these two statements into one. We, however, did not succeed. The reason for this is that our partition of the domain is different in the two cases (and not symmetric under the transposition of $\lambda_1$ and $\lambda_2$).
\begin{proof}
	Since $A$ is $S$-packing if and only if $A-t$ is $S$-packing, it is sufficient to show the desired inequality only for $t=0$. In this case $I_1 \coloneqq I_1(0) = [2\lambda_1+\lambda_2]$ and $I_2 \coloneqq I_2(0) = [2\lambda_1+\lambda_2]+\lambda_1+\lambda_2$.

	We start the proof by splitting $I_1\cup I_2$ into five disjoint intervals:
	\begin{align*}
		E_1 & = [1,\lambda_2],\\[3pt]
		E_2&=[\lambda_2+1,\lambda_1+\lambda_2],\\[3pt]
		E_3&=[\lambda_1+\lambda_2+1,2\lambda_1+\lambda_2],\\[3pt]
		E_4&=[2\lambda_1+\lambda_2+1,2\lambda_1+2\lambda_2],\\[3pt]
		E_5&=[2\lambda_1+2\lambda_2+1,3\lambda_1+2\lambda_2].
	\end{align*}

	For each $i \in [5],$ let us put $\alpha_i \coloneqq |E_i\!\setminus\!A|.$ Note that any element from $E_1,E_2,E_3,E_4,E_5$ is contained respectively in $1,2,3,2,1$ translates $S'\subset I_1 \cup I_2$ of $-S$. Moreover, let us put $\alpha'_4 \coloneqq |X|$, where $X$ is the set of all elements  $x\in E_4\!\setminus\!A$ such that either $x-\lambda_1-\lambda_2 \in A$, or $x-\lambda_1 \in A$. Set $\alpha''_4 \coloneqq |(E_4\!\setminus\!A)\!\setminus\!X|=\alpha_4-\alpha'_4.$
	
	For each translate $S'\subset I_1\cup I_2$ of $-S$, let us correspond two of its elements that do not belong to $A$. Lemma \ref{lemma:packing} ensures that it is always possible. For most of the translates, we do it arbitrarily, the only exception being when $S'=\{x-\lambda_1-\lambda_2, x-\lambda_1, x\},$ where $x \in (E_4\!\setminus\!A)\!\setminus\!X$: to such $S'$ we correspond $x-\lambda_1-\lambda_2$ and $x-\lambda_1$.
	
	Let us count, how many translates of $-S$ are possibly corresponding to an element from $E_i\!\setminus\!A$, for different $i.$ For $i=1,2,3,4,5$ an element from $E_i\!\setminus\!A$ is trivially corresponding to at most $1,2,3,2,1$ translates, respectively. Besides, observe that an element $x \in (E_4\!\setminus\!A)\!\setminus\!X$ is corresponding to at most one translate. Indeed, it is not corresponding to $\{x-\lambda_1-\lambda_2, x-\lambda_1, x\}$ by construction, so, $\{x-\lambda_2, x, x+\lambda_1 \}$ is the only possible option here. Since there are $2\lambda_1+\lambda_2$ translates of $-S$ inside $I_1\cup I_2$ in total and we correspond two of its points to each of them, we get the following inequality:
	\begin{equation*}
		\alpha_1+2\alpha_2+3\alpha_3+2\alpha'_4+\alpha''_4+\alpha_5\ge 4\lambda_1+2\lambda_2.
	\end{equation*}
	
	Let us show that $\alpha_1\ge \alpha'_4.$ Given $x\in X$, we consider $y(x)=x-2\lambda_1-\lambda_2 \in E_1$. Note that both $x-\lambda_1-\lambda_2$ and $x-\lambda_1$ belong to $S+y(x)$. Since
	one of these two elements belongs to $A$ by the definition of $X$ and since $A$ is $S$-packing, we conclude that $y(x)$ does not belong to $A$. Therefore, we constructed an injection from $X$ to $E_1\!\setminus\!A$, and thus $\alpha_1\ge \alpha_4'.$ Finally, we conclude that
	
	\begin{align*}2|I_1\!\setminus\!A|+|I_2\!\setminus\!A|\ =&\ 2(\alpha_1+\alpha_2+\alpha_3)+(\alpha_3+\alpha_4+\alpha_5)\\
		\ \ge&\ \alpha_1+2\alpha_2+3\alpha_3+\alpha'_4+\alpha_4+\alpha_5\\
		\ =&\ \alpha_1+2\alpha_2+3\alpha_3+2\alpha_4'+\alpha''_4+\alpha_5\\
		\ \ge&\ 4\lambda_1+2\lambda_2,
	\end{align*}
	as required.
\end{proof}

\begin{Corollary}\label{cor:packing}
	In the notation of Proposition~\ref{prop:3packingupper}, at least one of the following holds: 
	\begin{align*}
		|A\cap I_1(t)|\ \le\ & \left\lfloor \frac{1}{3}( 2\lambda_1+\lambda_2)\right\rfloor;\\
		|A\cap I_1(t)|+|A\cap I_2(t)|\ \le\ 2&\left\lfloor \frac{1}{3}( 2\lambda_1+\lambda_2)\right\rfloor.
	\end{align*}
\end{Corollary}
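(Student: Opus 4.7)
The plan is to mimic the derivation of Corollary~\ref{coraver} from Proposition~\ref{prop:3batonupperv2}, this time deducing Corollary~\ref{cor:packing} from Proposition~\ref{prop:3packingupper} by a short arithmetic argument. I would argue by contradiction: suppose neither conclusion holds, so that
$|A\cap I_1(t)| \ge \lfloor \frac{1}{3}(2\lambda_1+\lambda_2)\rfloor + 1$
and
$|A\cap I_1(t)| + |A\cap I_2(t)| \ge 2\lfloor \frac{1}{3}(2\lambda_1+\lambda_2)\rfloor + 1$ simultaneously.

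Since each $I_j(t)$ has length $2\lambda_1+\lambda_2$, one has $|I_j(t)\!\setminus\!A| = (2\lambda_1+\lambda_2) - |A\cap I_j(t)|$ for $j=1,2$, and hence
\begin{equation*}
2|I_1(t)\!\setminus\!A| + |I_2(t)\!\setminus\!A| \ =\ 3(2\lambda_1+\lambda_2) - |A\cap I_1(t)| - \bigl(|A\cap I_1(t)|+|A\cap I_2(t)|\bigr).
\end{equation*}
Plugging in the two assumed lower bounds yields the upper bound $3(2\lambda_1+\lambda_2) - 3\lfloor \frac{1}{3}(2\lambda_1+\lambda_2)\rfloor - 2$ on the left-hand side.

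Now I use the standing congruence $\lambda_1 \equiv \lambda_2 - 1 \pmod 3$, which gives $2\lambda_1 + \lambda_2 \equiv 1 \pmod 3$, so that $3\lfloor \frac{1}{3}(2\lambda_1+\lambda_2)\rfloor = 2\lambda_1+\lambda_2 - 1$. Substituting, the bound becomes $3(2\lambda_1+\lambda_2) - (2\lambda_1+\lambda_2-1) - 2 = 4\lambda_1+2\lambda_2 - 1$, which contradicts Proposition~\ref{prop:3packingupper}.

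The only real content is getting the residue computation right, and the only genuine obstacle—which there turns out not to be—would be a potential off-by-one mismatch between the floor on the packing side and the floor $\lfloor \frac{2}{3}(\cdot)\rfloor$ on the covering side; the congruence $2\lambda_1+\lambda_2 \equiv 1 \pmod 3$ makes the two sides line up perfectly, so the contradiction-style inversion of Proposition~\ref{prop:3packingupper} is as clean as that of Proposition~\ref{prop:3batonupperv2} in Corollary~\ref{coraver}.
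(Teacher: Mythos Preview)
Your argument is correct and essentially identical to the paper's own proof: both proceed by contradiction, rewrite $2|I_1(t)\setminus A|+|I_2(t)\setminus A|$ as $3(2\lambda_1+\lambda_2)-|A\cap I_1(t)|-(|A\cap I_1(t)|+|A\cap I_2(t)|)$, and use $2\lambda_1+\lambda_2\equiv 1\pmod 3$ to evaluate the floor and obtain $4\lambda_1+2\lambda_2-1$, contradicting Proposition~\ref{prop:3packingupper}.
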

\begin{proof}
	Assume that neither of the conclusions is valid. Then \begin{align*}
		2|I_1(t)\!\setminus\!A|+|I_2(t)\!\setminus\!A|\ =&\ 3(2\lambda_1+\lambda_2)-|A\cap I_1(t)|-(|A\cap I_1(t)|+|A\cap I_2(t)|)\\
		\ \le &\ 3(2\lambda_1+\lambda_2)-3\left\lfloor \frac{1}{3}( 2\lambda_1+\lambda_2)\right\rfloor-2\\
		\ =&\  3(2\lambda_1+\lambda_2) - 3\left(\frac{1}{3}( 2\lambda_1+\lambda_2)-\frac{1}{3}\right) - 2 \\
		\ =&\ 4\lambda_1+2\lambda_2-1,
	\end{align*}
	which contradicts the conclusion of Proposition~\ref{prop:3packingupper}.
\end{proof}

Now we finish the proof of Theorem~\ref{thm:3baton} for packings. Let $\lambda_1,\lambda_2 \in \N$ be two coprime integers such that $\lambda_1 \equiv \lambda_2-1 \pmod 3,$ $S=\{0,\lambda_1,\lambda_1+\lambda_2\}$, and $A\subset\Z$ be an $S$-packing set. It follows from Corollary~\ref{cor:packing} that $A$ satisfies the conditions of Proposition \ref{average} with
\begin{equation*}
	a = \left\lfloor \frac{1}{3}( 2\lambda_1+\lambda_2)\right\rfloor,  \ \ x = 2\lambda_1+\lambda_2, \ \ y = \lambda_1+\lambda_2.
\end{equation*}
Thus, Proposition \ref{average} guarantees that
\begin{equation*}
	\overline{d}(A) \le \frac{\left\lfloor \frac{1}{3}( 2\lambda_1+\lambda_2)\right\rfloor}{2\lambda_1+\lambda_2}.
\end{equation*}
Since the last value bounds from above the upper densities $\overline{d}(A)$ of all $S$-packing subsets $A\subset\Z$, it bounds from above their supremum $d_p(S)$ as well. This finally completes the proof of Theorem~\ref{thm:3baton}. 


{\small }

\end{document}